\theoremstyle{plain}
\newtheorem{thm}{Theorem}[section]
\newtheorem{lem}[thm]{Lemma}
\newtheorem{prop}[thm]{Proposition}
\newtheorem{Prop}[thm]{Proposition}
\newtheorem{Quest}{Question}
\theoremstyle{definition}
\newcommand{\N}{\ensuremath{\mathbb{N}}}
\newcommand{\sm}{\ensuremath{\setminus}}
\newcommand{\inv}{\ensuremath{^{-1}}}
\newcommand{\rand}{\partial}
\newcommand{\Aut}{\textnormal{Aut}}
\newcommand{\sub}{\subseteq}
\newcommand{\seq}[3]{(#1_#2)_{#2\in #3}}
\newcommand{\sequ}[1]{(#1_i)_{i\in{\mathbb N}}}
\newcommand{\comment}[1]{}
\newcommand{\nat}{{\mathbb N}}
\newcommand{\ganz}{{\mathbb Z}}
\begin{document}

\title{On fixing boundary points\\of transitive hyperbolic graphs}

\author{Agelos Georgakopoulos\thanks{Supported by FWF grant P-19115-N18.} \medskip \\
 {Technische Universit\"at Graz}\\
 {Steyrergasse 30, 8010}\\
 {Graz, Austria}\\
 \\
 \\
 Matthias Hamann \medskip \\
{Fachbereich Mathematik}\\
{Universit\"at Hamburg}\\
{Bundes\-stra\ss e~55}\\
{20146 Hamburg, Germany}\\
\\
}
\date{\today}
\maketitle

\begin{abstract}
\noindent We show that there is no $1$-ended, planar, hyperbolic graph such that the stabilizer of one of its hyperbolic boundary points acts transitively on the vertices of the graph.
This gives a partial answer to a question by Kaimanovich and Woess.
\end{abstract}

\section{Introduction}

In~\cite{Woess-Topo}, Woess asked for a classification of the multi-ended locally finite graphs such that a subgroup of their automorphism group acts transitively on the vertices and fixes an end.
This problem was solved by M\"oller~\cite{EoGII} by showing that these graphs are quasi-isometric to semi-regular trees.
For $1$-ended graphs the above question makes no sense, however it becomes interesting if one refines the ends by considering some other boundary. Kaimanovich and Woess~\cite{KaimWoess} considered this question with respect to the Gromov-hyperbolic boundary.

As the hyperbolic boundary is a refinement of the ends of a graph, an end that contains a hyperbolic boundary point fixed by a subgroup of the automorphism group of the graph is also fixed by that group.
Thus, the situation is solved in the cases where the hyperbolic graph has more than one end by M\"oller's aforementioned result, and the only case that remains to be discussed is the $1$-ended. Thus Kaimanovich and Woess~\cite{KaimWoess} asked:

\begin{Quest}{\em \cite[Section~6.4]{KaimWoess}}\label{quest_KaimWoess}
Does there exist a $1$-ended locally finite hyperbolic graph $G$ and a group acting transitively on $VG$ and fixing precisely one hyperbolic boundary point?
\end{Quest}

\medskip

Perhaps the only known result regarding Question~\ref{quest_KaimWoess} is that, as proved in~\cite[Section~4.D]{W-FixedSets}, for a finitely generated hyperbolic group, the group itself
acts transitively on the vertices of any of its locally finite Cayley graphs but fixes no hyperbolic boundary point.

The main result of this paper is that no graph satisfying the assertion of  Question~\ref{quest_KaimWoess} has an embedding into the Euclidean plane. As an intermediate step, we obtain a general result (Lemma~\ref{lem_ellipticExists}) proving the existence of certain types of automorphisms in a group as in Question~\ref{quest_KaimWoess} that might help prove the general case.

\section{Definitions and basic facts}

\subsection{Hyperbolic graphs}

In this section we define hyperbolic graphs and various  related objects.
For a more detailed introduction to hyperbolicity, we refer to~\cite{ABCFLMSS,CoornDelPapa,GhHaSur,gromov} and \cite[Chapter~22]{woessBook}. We will use the terminology of \cite{DiestelBook10}.

\medskip

Let $G=(VG,EG)$ be a graph.
A {\em geodesic} is a path between two vertices $x$ and $y$ with length $d(x,y)$, i.e.\ the $x-y$ distance in the graph.
The graph $G$ is called {\em $\delta$-hyperbolic} for a $\delta\geq 0$ if it is locally finite and if for every three vertices $x,y,z\in VG$, for every choice of three geodesics $\pi_{xy},\pi_{yz},\pi_{zx}$ joining $x,y,z$ in pairs, and for any point $\xi$ on $\pi_{xy}$, there is a point on $\pi_{yz}$ or $\pi_{zx}$ having distance at most $\delta$ to~$\xi$. Note that $\xi$ might be a vertex or an inner point of an edge\footnote{We are considering the graphs as $1$-simplices, which means that every edge is assumed to be an isometric image of the unit interval $[0,1]$.
}.
We call $G$ {\em hyperbolic} if there exists a $\delta\geq 0$ such that $G$ is $\delta$-hyperbolic.

A {\em ray} is a one-way infinite path and a {\em double ray} is a two-way infinite path.
Two rays are {\em equivalent} if for any finite set $S$ of vertices they both lie  eventually in the same component of $G-S$.
The equivalence classes of this relation are the {\em ends} of~$G$.

A ray or double ray is {\em geodetic} if every finite subpath of its is a geodesic.
Two geodetic rays  $\pi=x_1x_2\ldots$ and $\pi'=y_1y_2\ldots$ are {\em equivalent} if for every $i\in\N$ there is an $M\in \N$ such that $\liminf_{n\to\infty}d(x_{i+n},\pi')\leq M$.
It is well-known, see for example \cite[(22.12)]{woessBook}, that this defines an equivalence relation.
A~{\em hyperbolic boundary point} is an equivalence class of geodetic rays and the {\em hyperbolic boundary} $\rand G$ is the set of hyperbolic boundary points.
Let $\widehat{G}$ denote $G\cup\rand G$.

By~\cite[Proposition 7.2.9]{GhHaSur}, we can equip $\widehat{G}$ with a topology such that it is a compact space and such that every geodetic ray converges to the hyperbolic boundary point it is contained in.

Proposition~\ref{RightChoiceOfGeodRays} guarantees that we always find geodetic (double) rays with certain properties.

\begin{Prop}\label{RightChoiceOfGeodRays}\label{prop_geodExists} {\em \cite[(22.11) and (22.15)]{woessBook}}
Let $G$ be a hyperbolic graph with two distinct boundary points $\eta$ and $\nu$.
Let $o$ be a vertex in $G$, $x_1x_2\ldots$ a geodetic ray converging to $\eta$, and $y_1y_2\ldots$ a geodetic ray converging to $\nu$.
Then the following two properties hold:
\begin{enumerate}[{\em (i)}]
\item There is a geodetic ray in $G$ starting at $o$ and having only finitely many vertices outside $\{x_i\mid i\in\nat\}$.
\item There is a geodetic double ray $D$ having only finitely many vertices outside $\{x_i\mid i\in\nat\}\cup \{y_i\mid i\in\nat\}$.
One side of $D$ converges to $\eta$, the other to~$\nu$.\qed
\end{enumerate}
\end{Prop}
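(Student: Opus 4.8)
The plan is to avoid any appeal to the compactness of $\widehat G$ and instead extract both rays from one completely elementary observation about monotone integer sequences. Write $R=x_1x_2\ldots$, so that $d(x_i,x_j)=|i-j|$, and fix the base vertex $o$. For part~(i) I would consider the integer sequence $a_m:=d(o,x_m)-m$. Since $x_m$ and $x_{m+1}$ are adjacent we have $d(o,x_{m+1})\le d(o,x_m)+1$, hence $a_{m+1}\le a_m$; and $a_m\ge d(x_1,x_m)-d(o,x_1)-m=-1-d(o,x_1)$, so the sequence is bounded below. A non-increasing, bounded-below sequence of integers is eventually constant, say $a_m=c$ for all $m\ge M_0$. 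This is exactly the identity $d(o,x_m)=d(o,x_{M_0})+(m-M_0)$ for every $m\ge M_0$, which says that concatenating any geodesic $P$ from $o$ to $x_{M_0}$ with the tail $x_{M_0}x_{M_0+1}\ldots$ of $R$ produces a ray all of whose finite subpaths are geodesics: a one-line computation, using that $P$ is geodesic together with the displayed identity, confirms that a crossing subpath from $u\in P$ to $x_m$ has length $d(u,x_m)$. This ray starts at $o$, has all but the finitely many inner vertices of $P$ on $R$, and hence converges to $\eta$.

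For part~(ii) I would run the same idea in two variables. With $R'=y_1y_2\ldots$ converging to $\nu$, set
\[ F(m,n):=d(x_m,y_n)-m-n. \]
Adjacency of consecutive vertices on $R$ and on $R'$ gives, exactly as above, that $F$ is non-increasing in each of its two arguments separately. The crucial point—and the only place where the hypothesis $\eta\neq\nu$ enters—is that $F$ is bounded below. This follows from the standard fact that the Gromov product $(x_m\mid y_n)_{y_1}$ stays bounded when $x_m\to\eta$ and $y_n\to\nu$ with $\eta\neq\nu$ (see, e.g., \cite{woessBook}); combined with $d(y_1,y_n)=n-1$ and the estimate $d(y_1,x_m)\ge m-C$ coming from part~(i) applied at $o=y_1$, expanding the Gromov product yields $d(x_m,y_n)\ge m+n-C'$, i.e.\ $F\ge -C'$.

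I then need the two-variable analogue of ``eventually constant'': a function $F\colon\N^2\to\Z$ that is non-increasing in each variable and bounded below is \emph{constant on some up-right quadrant}. This is quick: for fixed $m$ the section $n\mapsto F(m,n)$ is eventually constant, its limit $g(m)$ is non-increasing in $m$ and bounded below, hence $g(m)=L:=\inf F$ for all $m\ge M_0$; choosing $N_0$ with $F(M_0,n)=L$ for $n\ge N_0$ and using monotonicity in $m$ forces $F(m,n)=L$ for all $m\ge M_0,\ n\ge N_0$. Thus $d(x_m,y_n)=m+n+L$ throughout that quadrant. Finally, taking any geodesic $P$ from $x_{M_0}$ to $y_{N_0}$ and forming
\[ D \;=\; \ldots x_{M_0+1}x_{M_0}\;\cdot\;P\;\cdot\;y_{N_0}y_{N_0+1}\ldots \]
(the tail of $R$ reversed, then $P$, then the tail of $R'$), the same linearity computation as in~(i)—now applied across $P$ at both junctions—shows that every finite subpath of $D$ is a geodesic. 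One side of $D$ is a tail of $R$ and the other a tail of $R'$, so they converge to $\eta$ and $\nu$, and the only vertices of $D$ outside $\{x_i\}\cup\{y_i\}$ are the finitely many inner vertices of $P$.

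The step I expect to be the genuine obstacle is the lower bound on $F$: everything else is bookkeeping with the triangle inequality, but the inequality $d(x_m,y_n)\ge m+n-C'$ is precisely what records that the two rays really do pull apart towards distinct boundary points, and it is where $\delta$-hyperbolicity (through the boundary extension of the Gromov product) is indispensable. A secondary subtlety worth flagging is that one must obtain the \emph{uniform} quadrant statement $F\equiv L$ on $\{m\ge M_0,\ n\ge N_0\}$, not merely the iterated limits; without the uniform version the connecting geodesic $P$ could fail to glue correctly and $D$ would acquire infinitely many vertices off the two given rays, defeating the conclusion.
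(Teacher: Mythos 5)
Your argument is correct, but it is not the route the paper takes: the paper offers no proof at all here, deferring entirely to \cite[(22.11) and (22.15)]{woessBook}, where such statements are obtained by compactness-type arguments (extracting limit rays from the geodesics $o\to x_m$, resp.\ $x_m\to y_n$, via K\"onig's lemma, with the Gromov product used to pin the connecting geodesics near a fixed ball). Your construction is more elementary and arguably more informative. For (i) you use no hyperbolicity at all: the monotone integer sequence $a_m=d(o,x_m)-m$ stabilises, and the concatenation of a geodesic from $o$ to $x_{M_0}$ with the tail of the given ray is verified to be geodetic by a two-sided triangle-inequality computation; this part is valid in any connected locally finite graph. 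For (ii) the stabilisation of $F(m,n)=d(x_m,y_n)-m-n$ on an up-right quadrant is exactly the uniform statement one needs, and your derivation of it from separate monotonicity plus boundedness is sound. You also correctly isolate the only point where hyperbolicity and $\eta\neq\nu$ enter, namely the lower bound $d(x_m,y_n)\ge m+n-C'$ via the boundedness of the Gromov product of inequivalent geodetic rays; that fact is indeed available in the cited chapter of Woess and may legitimately be quoted rather than reproved. Two minor points worth recording explicitly: geodeticity of every finite subwalk automatically forces the concatenations to be genuine (double) rays with no repeated vertices, and the degenerate case $x_{M_0}=y_{N_0}$ (so that $P$ is trivial) causes no harm. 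The quadrant uniformity you flag as a potential pitfall is handled correctly, so I see no gap.
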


Equivalent geodetic rays stay close to each other:

\begin{prop}\label{prop_W22.12a}{\em \cite[Proposition 22.12]{woessBook}}
If $x_1x_2\ldots$ and $y_1y_2\ldots$ are equivalent geodetic rays in a hyperbolic graph, then there is a $k\in\ganz$ such that ${d(x_n,y_{n-k})\leq 2\delta}$ for all but finitely many~$n$.\qed
\end{prop}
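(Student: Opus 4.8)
The plan is to reduce to the case where the two rays start at the same vertex, where a single slim-triangle argument yields the sharp constant $2\delta$, and then to transfer the conclusion back to $\pi=x_1x_2\ldots$ and $\pi'=y_1y_2\ldots$ by replacing $\pi$ with a geodetic ray based at $y_1$ that eventually runs along it. So I would first isolate the following special case: \emph{if $\gamma$ and $\gamma'$ are equivalent geodetic rays issuing from a common vertex $o$, then $d(\gamma(t),\gamma'(t))\le 2\delta$ for every $t$}, writing $\gamma(t),\gamma'(t)$ for the $t$-th vertices.

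To prove this special case, fix $t$ and exploit the equivalence to choose an index $N$, much larger than $t$, together with an index $M$ such that $d(\gamma(N),\gamma'(M))\le D$ for a constant $D$ not depending on $N$; such pairs exist for arbitrarily large $N$ because the $\liminf$ in the definition of equivalence is finite. Now apply the $\delta$-slimness of the geodesic triangle with corners $o$, $\gamma(N)$, $\gamma'(M)$, whose sides are the initial segments of $\gamma$ and of $\gamma'$ together with a geodesic of length at most $D$ joining $\gamma(N)$ to $\gamma'(M)$. The point $\gamma(t)$ lies on the first side; since $d(\gamma(t),\gamma(N))=N-t$ is large while the third side has length $\le D$, for $N$ large enough $\gamma(t)$ cannot be within $\delta$ of that third side, so it is within $\delta$ of a vertex $\gamma'(s)$ on the initial segment of $\gamma'$. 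Because $\gamma$ and $\gamma'$ are geodesics based at $o$, we have $|s-t|=|d(o,\gamma'(s))-d(o,\gamma(t))|\le d(\gamma(t),\gamma'(s))\le\delta$, and hence $d(\gamma(t),\gamma'(t))\le d(\gamma(t),\gamma'(s))+|s-t|\le 2\delta$, as desired.

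Finally I would carry out the reduction. By the first part of Proposition~\ref{prop_geodExists}, applied with basepoint $y_1$ and the ray $\pi$, there is a geodetic ray $\sigma=z_1z_2\ldots$ starting at $y_1$ that has only finitely many vertices outside $\{x_i\mid i\in\N\}$; thus a tail of $\sigma$ is a geodesic subray of $\pi$, and since both are parametrised by arc length this tail agrees with $\pi$ up to a fixed index shift, i.e.\ $z_\ell=x_{\ell+k}$ for all large $\ell$ and some fixed $k\in\Z$. As $\sigma$ shares a tail with $\pi$ it is equivalent to $\pi$, hence to $\pi'$, and it starts at $y_1$; applying the special case to $\sigma$ and $\pi'$ gives $d(z_\ell,y_\ell)\le 2\delta$ for all $\ell$, which for large $\ell$ reads $d(x_{\ell+k},y_\ell)\le 2\delta$. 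Setting $n=\ell+k$ yields $d(x_n,y_{n-k})\le 2\delta$ for all but finitely many $n$. The step I expect to be most delicate is the common-basepoint lemma: obtaining precisely $2\delta$ (rather than a larger multiple of $\delta$) hinges on routing $\gamma(t)$ onto $\gamma'$ itself — which is why the common basepoint is essential, as it makes the two relevant sides of the triangle genuine initial segments of the rays — and on the exact arc-length matching $|s-t|\le\delta$. A secondary point requiring care is verifying that the index offset between $\sigma$ and $\pi$ is truly constant, and that Proposition~\ref{prop_geodExists}(i) applies (one may need to invoke a second boundary point to meet its hypotheses, the alternative of a single boundary point being easy to treat directly).
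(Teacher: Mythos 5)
The paper offers no proof of this proposition --- it is quoted from Woess's book with a \qed\ --- so there is nothing to compare against except the standard argument, which is exactly what you give. Your proof is correct: the common-basepoint special case via a slim triangle with corners $o$, $\gamma(N)$, $\gamma'(M)$ yields the sharp bound $2\delta$ (the estimate $|s-t|\le\delta$ works even if the nearby point is an interior point of an edge rather than a vertex), and the reduction via Proposition~\ref{prop_geodExists}(i) is sound, since the geodetic property of both $\sigma$ and $\pi$ forces the map $\ell\mapsto a_\ell$ (where $z_\ell=x_{a_\ell}$) to be an isometry of index sets into $\N$ and hence a constant shift, which also gives the equivalence $\sigma\sim\pi\sim\pi'$ needed to invoke the special case.
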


Let $\gamma>1,c\geq 0$.
A  ray $x_0x_1\ldots$ in~$G$ is {\em $(\gamma,c)$-quasi-geodetic} if $d(x_i,x_j)\leq \gamma|i-j|+c$ for all $i,j\in\nat$. A $(\gamma,c)$-quasi-geodetic double ray is defined similarly.
Hence a (double) ray is geodetic, if it is a $(1,0)$-quasi-geodetic (double) ray.
If the constants $\gamma,c$ are not important then we just speak of {\em quasi-geodesics}.

The next proposition shows, that in every hyperbolic graph the geodesics and quasi-geodesics lie close to each other, see also \cite[Proposition 3.3]{ABCFLMSS}, \cite[3.1.3]{CoornDelPapa}, \cite[5.6, 5.11]{GhHaSur}, and \cite[7.2.A]{gromov}.

\begin{prop}\label{prop_geodAndQuasigeod}{\em\cite[Th\'eor\`eme~3.1.4]{CoornDelPapa}}
Let $G$ be a $\delta$-hyperbolic graph.
For all $\gamma_1\geq 1,\gamma_2\geq 0$ there is a constant $\kappa=\kappa(\delta,\gamma_1,\gamma_2)$ such that for every two vertices $x,y\in VG$ every $(\gamma_1,\gamma_2)$-quasi-geodesic between $x$ and $y$ lies in a $\kappa$-neighborhood around every geodesic between $x$ and $y$ and vice versa.

Furthermore, this extends to $(\gamma_1,\gamma_2)$-quasi-geodetic and geodetic rays as well as double rays.\qed
\end{prop}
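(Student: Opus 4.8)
\medskip
\noindent\emph{Proof plan.}
The plan is to establish the classical stability (``Morse'') property in the stated form, following the strategy of Bridson--Haefliger: first prove it for a pair of vertices $x,y$ at finite distance, and then deduce the ray and double ray versions by exhaustion. Since $G$ is a graph, a $(\gamma_1,\gamma_2)$-quasi-geodesic $\sigma$ between $x$ and $y$ is already a rectifiable path, with arclength between $x_i$ and $x_j$ equal to $|i-j|$; hence no ``taming'' step is needed, and the quasi-geodesic inequality bounds the length of any subpath linearly in the distance between its endpoints, $|i-j|\le\gamma_1\bigl(d(x_i,x_j)+\gamma_2\bigr)$. Throughout, fix a geodesic $[x,y]$ and write $N_r(\cdot)$ for the $r$-neighborhood.

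The technical core is a logarithmic divergence estimate: for any rectifiable path $\tau$ from $u$ to $v$ and any geodesic $[u,v]$, every point $z\in[u,v]$ satisfies $d\bigl(z,\operatorname{im}\tau\bigr)\le\delta\log_2\ell(\tau)+1$. I would prove this by dyadically subdividing $\tau$ by arclength: $\delta$-hyperbolicity places $z$ within $\delta$ of one of the two sides of the triangle on $u$, the arclength midpoint of $\tau$, and $v$; iterating $\lceil\log_2\ell(\tau)\rceil$ times reduces to a subpath of length at most $1$ and accumulates a total error of at most $\delta\log_2\ell(\tau)+1$.

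For the inclusion $[x,y]\subseteq N_\kappa(\operatorname{im}\sigma)$, set $D=\max_{z\in[x,y]}d(z,\operatorname{im}\sigma)$, attained at some $z_0$. Choose $y',y''\in[x,y]$ on the two sides of $z_0$ at distance $2D$ (or the endpoints $x,y$ if nearer), pick $w',w''\in\operatorname{im}\sigma$ within $D$ of them, and let $\gamma$ be the concatenation of the geodesic $[y',w']$, the subarc of $\sigma$ between $w'$ and $w''$, and the geodesic $[w'',y'']$. Since $[y',y'']$ is a subsegment of $[x,y]$ and hence a geodesic containing $z_0$, and since by the choice of $D$ and the triangle inequality every point of $\gamma$ lies at distance at least $D$ from $z_0$, the divergence estimate yields $D\le\delta\log_2\ell(\gamma)+1$. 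On the other hand $d(w',w'')\le 6D$, so the linear length bound forces $\ell(\gamma)\le c_1D+c_2$ with $c_1,c_2$ depending only on $\gamma_1,\gamma_2$. Combining gives $D\le\delta\log_2(c_1D+c_2)+1$, and since the right-hand side grows only logarithmically, this caps $D$ by a constant $\kappa_0=\kappa_0(\delta,\gamma_1,\gamma_2)$. The reverse inclusion $\operatorname{im}\sigma\subseteq N_\kappa([x,y])$ follows by a maximal-interval argument: on a maximal subarc of $\sigma$ lying outside $N_{\kappa_0}([x,y])$, the endpoints project close to $[x,y]$ at points spanning a segment whose length is controlled via the first inclusion, so the quasi-geodesic inequality bounds the parameter length, hence the arclength, of the subarc, yielding the symmetric bound.

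Finally, the ray and double ray versions follow by exhausting the (double) ray with finite quasi-geodetic segments between sample vertices: the constant $\kappa$ above is independent of $x,y$, so a limiting argument, using compactness of $\widehat{G}$ to extract limiting geodesics when the relevant endpoints lie on $\rand G$, transfers the uniform bound to the infinite case. The step I expect to be the main obstacle is the self-referential estimate $D\le\delta\log_2(c_1D+c_2)+1$ of the third paragraph: one must track the constants carefully enough that the logarithmic term genuinely dominates and produces an explicit, finite cap $\kappa$ depending only on $\delta,\gamma_1,\gamma_2$.
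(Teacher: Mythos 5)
The paper gives no proof of this proposition---it is imported verbatim from \cite[Th\'eor\`eme~3.1.4]{CoornDelPapa}---and your outline reconstructs exactly that standard stability argument (the logarithmic divergence bound for rectifiable paths via dyadic subdivision, the self-improving estimate $D\le\delta\log_2(c_1D+c_2)+1$, the maximal-excursion argument for the reverse inclusion, and a limiting argument for rays and double rays), so it is essentially the same approach as the cited source and the sketch is sound. One caveat: your inequality $|i-j|\le\gamma_1\bigl(d(x_i,x_j)+\gamma_2\bigr)$ relies on the lower quasi-geodesic bound, which the paper's stated definition of $(\gamma,c)$-quasi-geodetic omits (it requires only $d(x_i,x_j)\le\gamma|i-j|+c$, under which every path would qualify and the proposition would be false); you are right to read the standard two-sided definition into it, and this lower bound is genuinely indispensable both for bounding $\ell(\gamma)$ in terms of $D$ and for the reverse inclusion.
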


The following result is~\cite[Proposition 3.2]{ABCFLMSS} (see also \cite[8.1.D]{gromov} and \cite[8.21]{GhHaSur}).

\begin{prop}\label{prop_ParaOnQuasiGeod}
Let $G$ be a transitive $\delta$-hyperbolic graph.
Let $x\in VG$ and $\alpha\in\Aut(G)$ be such that the orbit of~$x$ under $\alpha$ is infinite.
Then the set $\{\ldots,x{\alpha\inv},x,x\alpha,\ldots\}$ lies on a $(\kappa,\lambda)$-quasi-geodetic double ray for constants $\kappa\geq 1, \lambda\geq 0$ that depend only on $\delta$ and $d(x,x\alpha)$.\qed
\end{prop}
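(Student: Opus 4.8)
The plan is to prove the two-sided quasigeodesic inequality for the orbit and then realise it as a double ray. Write $D:=d(x,x\alpha)$ and $x_n:=x\alpha^n$ for $n\in\Z$; since the orbit is infinite the map $n\mapsto x_n$ is injective (otherwise $x\alpha^{n}=x$ for some $n\neq 0$ and the orbit is finite), the vertices $x_n$ are pairwise distinct, $d(x_n,x_{n+1})=D$ for all $n$, and, as $G$ is locally finite, $a_n:=d(x_0,x_n)\to\infty$ as $\abs{n}\to\infty$. Concatenating the geodesic segments $\alpha^n[x_0,x_1]$ produces an $\alpha$-invariant walk through all the $x_n$ along which the triangle inequality already gives $d(x_m,x_n)\le D\abs{m-n}$; hence the entire content of the proposition is the matching lower bound $d(x_m,x_n)\ge \abs{m-n}/\kappa-\lambda$, i.e.\ that the orbit diverges \emph{linearly} in both directions. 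Equivalently, I must show that $\alpha$ is loxodromic rather than parabolic.

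The engine is a self-similar identity for Gromov products. Writing $(y\mid z)_w:=\tfrac12\bigl(d(y,w)+d(z,w)-d(y,z)\bigr)$ and using that $\alpha$ is an isometry, one gets $(x_{i-k}\mid x_{i+k})_{x_i}=(x_{-k}\mid x_k)_{x_0}=:p_k$ for all $i,k$, and unravelling the definition gives the exact recursion $a_{2k}=2a_k-2p_k$. Since $a_n$ is subadditive ($a_{m+n}\le a_m+a_n$), the stable length $\ell:=\lim_n a_n/n=\inf_n a_n/n$ exists and satisfies $0\le\ell\le D$. Suppose we knew a bound $p_k\le P$ for all $k$. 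The recursion then reads $g_{2k}\ge 2g_k$ for $g_k:=a_k-2P$, so as soon as $a_{k_0}>2P$ for some index $k_0$ we obtain $a_{2^m k_0}\ge 2^m(a_{k_0}-2P)$ and hence $\ell\ge(a_{k_0}-2P)/k_0>0$. Because $a_n\to\infty$, such a $k_0$ exists, so a uniform bound on $p_k$ yields $\ell>0$; conversely $\ell>0$ is well known to be equivalent to $\sup_k p_k<\infty$ and to the two sides of the orbit converging to \emph{distinct} boundary points.

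The crux is therefore to bound $p_k$ — and, for the uniformity of the constants, to do so by a quantity $P=P(\delta,D)$ and to bound the escape index $k_0$ uniformly as well. This is exactly the step that uses transitivity: an unbounded $p_k$ would mean that both rays $x_0x_1\ldots$ and $x_0x_{-1}\ldots$ accumulate at a single $\alpha$-fixed boundary point and that the orbit is horocyclic, which a cocompact (vertex-transitive) action on a locally finite graph does not admit. Quantitatively I would argue as follows. Local finiteness together with transitivity makes the balls $B_r(x)$ finite of size independent of the centre, and $\delta$-thinness leaves only boundedly many local configurations at each scale; feeding this into the $\alpha$-invariance of the geodesics $[x_{-k},x_k]$ (each passing at distance $\approx p_k$ from $x_0$) together with the pigeonhole principle forces $p_k$ below a threshold $P(\delta,D)$ and the first escape index $k_0$ to be uniformly bounded. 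Since graph distances are integers, $a_{k_0}>2P$ then gives $a_{k_0}-2P\ge 1$, whence $\ell\ge\ell_0(\delta,D)>0$. I expect this uniform gap — ruling out not only parabolics but also arbitrarily slowly diverging orbits — to be the main obstacle, since the naive pigeonhole bound on $k_0$ a priori involves ball sizes, hence the graph rather than $\delta$ and $D$ alone; controlling it genuinely requires the interplay of hyperbolicity with cocompactness.

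Finally I would assemble the double ray. With $\ell\ge\ell_0(\delta,D)>0$ in hand, standard $\delta$-thin-triangle estimates upgrade linear divergence to the full two-sided bound with $\kappa,\lambda$ depending only on $\delta$ and $D$, so the sequence $(x_n)$ is a $(\kappa,\lambda)$-quasigeodesic. The concatenation of the segments $\alpha^n[x_0,x_1]$ is then a $(\kappa',\lambda')$-quasi-geodetic walk passing through every $x_n$; linear divergence makes far-apart segments disjoint, so after deleting the finitely many local backtracks one is left with a genuine double ray through the orbit, its two ends converging to the two distinct boundary points. If a truly geodetic comparison object is wanted, Proposition~\ref{prop_geodAndQuasigeod} shows that this quasi-geodetic double ray lies within a bounded neighbourhood of a geodetic double ray, with all constants still depending only on $\delta$ and $D$.
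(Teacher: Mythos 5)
The paper offers no proof of this proposition: it is quoted verbatim from \cite[Proposition~3.2]{ABCFLMSS} and closed with a \verb|\qed|, so there is no in-paper argument to compare against; your proposal has to stand on its own. Its outer structure is sound and matches the standard reductions: the upper bound $d(x_m,x_n)\le D|m-n|$ is trivial; $a_n=a_{-n}$ is subadditive, so by Fekete $a_n\ge \ell n$ with $\ell=\inf a_n/n$, and a uniform lower bound $\ell\ge\ell_0(\delta,D)>0$ would indeed give the two-sided inequality for the orbit points; the identity $a_{2k}=2a_k-2p_k$ is correct; and the implication ``$p_k\le P$ for all $k$, plus one index $k_0$ with $a_{k_0}>2P$, implies $\ell>0$'' is correct.

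The genuine gap is that the crux --- $p_k\le P(\delta,D)$ together with a bound $k_0\le K(\delta,D)$ on the first escape index --- is asserted, not proved. This is exactly the content of the proposition (no parabolic-type orbits, with constants uniform in $\delta$ and $d(x,x\alpha)$ only), so as written the argument assumes what it must show. The pigeonhole sketch does not work in the form given: the geodesics $[x_{-k},x_k]$ are \emph{not} $\alpha$-invariant ($\alpha$ sends $[x_{-k},x_k]$ to a geodesic between different endpoints), ``boundedly many local configurations at each scale'' is not a statement you can invoke, and you yourself concede that the natural counting bound involves ball sizes and hence the graph rather than $(\delta,D)$. The standard route (as in the cited sources) is different in kind: one proves a local-to-global lemma for chains of geodesic segments whose Gromov products at the break points are small relative to the segment lengths, and applies it to the points $x\alpha^{Ni}$ for a power $N$ bounded in terms of $\delta$ and $D$; that lemma is the missing ingredient, and nothing in your text substitutes for it. A secondary, smaller issue: extracting an injective double ray through \emph{all} the $x_n$ from the concatenated walk is not just ``deleting local backtracks'', since shortcutting a self-intersection between nearby segments can delete intermediate orbit points; this needs (routine but nonzero) care.
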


\subsection{Planar graphs}

A graph is {\em planar} if it admits an embedding, as a $1$-complex, into the Euclidean plane.
Such embeddings are called {\em planar} embeddings.

An embedding of $G$ is called {\em consistent} if, intuitively, it embeds every vertex in a similar way in the sense that the group action carries faces to faces. Let us make this more precise.
Given an embedding $\sigma$ of a graph $G$, we consider for every vertex $x$ the embedding of the edges incident with $x$, and define the {\em spin} of $x$ to be the cyclic order of the set $\{xy \mid y\in N(x)\}$ in which $xy_1$ is a successor of $xy_2$ whenever the edge $xy_2$ comes immediately after the edge $xy_1$ as we move clockwise around $x$. 

Call an automorphism $\alpha$ of $G$ {\em spin-preserving} if for every $x\in VG$ the spin of $x\alpha$ is the image of the spin of $x$ in $\sigma$. 
Call it {\em spin-reversing} if for every $x\in VG$ the spin of $x\alpha$ is the reverse of the image of the spin of $x$ in $\sigma$. Call an automorphism {\em consistent} if it is spin-preserving or spin-reversing in $\sigma$. Finally, call the embedding $\sigma$ {\em consistent} if every automorphism of $G$ is consistent in $\sigma$.

It is straightforward to check that $\sigma$ is consistent if and only if every automorphism of $G$ maps every facial path to a facial path. Thus the following classical result, proved by Whitney \cite[Theorem 11]{whitney_congruent_1932} for finite graphs and by Imrich \cite{ImWhi} for infinite ones, implies that all planar embeddings of a 3-connected transitive graph are consistent.

\begin{thm} \label{imrcb}
Let $G$ be a 3-connected graph embedded in the sphere. Then every automorphism of $G$ maps each facial path to a facial path. Thus every automorphism of $G$ is consistent.
\end{thm}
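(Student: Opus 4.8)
The plan is to show that the face boundaries of a $3$-connected planar graph are singled out by a purely combinatorial condition, so that their invariance under $\Aut(G)$ becomes automatic. For finite graphs the relevant condition is Tutte's notion of a \emph{peripheral} cycle, that is, a cycle $C$ which is \emph{induced} (has no chord) and \emph{non-separating} (the graph obtained from $G$ by deleting the vertices of $C$ is connected). Tutte's theorem states that in a finite $3$-connected planar graph the face boundaries of any sphere embedding are exactly the peripheral cycles; crucially, peripherality refers only to the adjacency and connectivity of $G$ and not to the embedding~$\sigma$.

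First I would check that peripherality is preserved by every automorphism $\alpha$ of $G$. Since $\alpha$ preserves both adjacency and non-adjacency, it carries each induced cycle $C$ to an induced cycle $C\alpha$; and since $\alpha$ is an isomorphism, it carries non-separating vertex sets to non-separating vertex sets. Hence $\alpha$ permutes the peripheral cycles of $G$, and so by Tutte's characterization it permutes the face boundaries of $\sigma$. A facial path is, by definition, a subpath of some face boundary $C$, so its image under $\alpha$ is a subpath of the face boundary $C\alpha$ and is again facial. This establishes the first assertion of the theorem; the second follows immediately from the remark preceding it, namely that $\sigma$ is consistent precisely when every automorphism maps facial paths to facial paths.

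The step I expect to be the main obstacle is the extension from finite to infinite graphs, which is where Imrich's theorem is genuinely needed. In a sphere embedding of an infinite graph a face need not be bounded by a finite cycle: its boundary may be an infinite walk, and vertices of $G$ may accumulate at points of the sphere. Tutte's finite-cycle description then no longer captures every face, so the combinatorial recognition of ``lying on a face boundary'' has to be localised. Using that $G$ is locally finite and $3$-connected, I would argue that the spin at each vertex is forced up to a single global reversal, so that each finite facial path is still detected by the combinatorial data in a bounded neighbourhood; this is exactly the uniqueness of the embedding up to reflection supplied by Imrich's theorem. The delicate point is to control the behaviour near the accumulation points, ensuring that two combinatorially indistinguishable finite paths cannot be assigned different facial status. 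Once this is secured, the invariance argument of the previous paragraph applies without change, and every automorphism maps facial paths to facial paths.
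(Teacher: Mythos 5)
The paper does not actually prove this theorem: it cites Whitney \cite{whitney_congruent_1932} for finite graphs and Imrich \cite{ImWhi} for infinite ones, so there is no in-paper argument to compare against. Judged on its own terms, your finite-case argument is correct and complete. Tutte's theorem that the face boundaries of a finite $3$-connected planar graph are exactly the peripheral (induced, non-separating) cycles makes ``being a face boundary'' a purely combinatorial property; automorphisms preserve induced cycles and non-separation, hence permute face boundaries, and facial paths are subpaths of face boundaries. This is in fact a genuinely different (and cleaner) route from Whitney's original proof, which proceeds via unique embeddability rather than via a combinatorial characterization of the faces.

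The infinite case, however, is not proved in your proposal --- and that is precisely the case the paper needs, since its graphs are infinite. You correctly identify the obstacles (face boundaries need not be finite cycles, vertices may accumulate on the sphere, so Tutte's characterization no longer recognizes every face), but you then resolve them by appealing to ``the uniqueness of the embedding up to reflection supplied by Imrich's theorem.'' Imrich's theorem \emph{is} the infinite-graph version of the statement you are trying to prove --- the paper attributes the infinite case to Imrich for exactly this reason --- so this step is circular as a proof and at best collapses into the same bare citation the paper makes. A self-contained argument would require either proving unique embeddability for infinite $3$-connected planar graphs directly, or extending the peripheral-cycle characterization to the infinite setting (where face boundaries can be double rays and the non-separating condition has to be reformulated); neither is actually carried out in your sketch. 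Note also that the theorem as stated does not assume local finiteness, whereas your argument for the infinite case does.
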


The next result is due to Babai and Watkins \cite{BW-Connectivity}, see also~\cite[Lemma~2.4]{B-GrowthRate}.

\begin{lem}\label{lem_BW}{\em \cite[Theorem~1]{BW-Connectivity}}
Let $G$ be a locally finite connected transitive graph  that has precisely one end.
Let $d$ be the degree of any of its vertices.
Then the connectivity of $G$ is at least $3(d+1)/4$.\qed
\end{lem}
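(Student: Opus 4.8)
The plan is to use the classical method of \emph{fragments} and \emph{atoms} for the connectivity of transitive graphs, due to Watkins and Mader, and then to extract the improvement from $\tfrac{2}{3}(d+1)$ to $\tfrac{3}{4}(d+1)$ from the one-ended hypothesis. First observe that a locally finite connected transitive graph with exactly one end has degree $d\geq 3$: a $2$-regular connected graph is a cycle or a double ray, neither of which is $1$-ended, and $d\leq 1$ is impossible. Since deleting the $d$ neighbours of a vertex disconnects $G$, we always have $\kappa(G)\leq d$, and if $\kappa(G)=d$ then $\kappa(G)=d\geq\tfrac{3}{4}(d+1)$ because $d\geq 3$; so we may assume $\kappa<d$ throughout. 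Fix a minimum vertex cut $S$ with $|S|=\kappa$. Because $G$ has precisely one end, exactly one component of $G-S$ is infinite; call it $B$, and let $A$ be any other, hence finite, component. If some $s\in S$ had no neighbour in $A$, then $S\setminus\{s\}$ would already separate $A$ from the rest, contradicting minimality of $\kappa$; the same applies to $B$, so every vertex of $S$ has a neighbour in both sides and $N(A)=N(B)=S$. Among all finite fragments (finite sets $A$ with $N(A)$ a minimum cut) I choose one of least cardinality; this is an \emph{atom}, and one-endedness guarantees it is finite.

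Next I would invoke Watkins' structural result: when $\kappa<d$ the atoms are blocks of imprimitivity for $\Aut(G)$, so any two atoms are equal or disjoint, and by transitivity they partition $VG$ into blocks of a common size $a=|A|$. The setwise stabiliser of a block acts transitively on it, so $G[A]$ is connected and $r$-regular for some $r\leq a-1$. If $a=1$ then $S=N(A)$ is the neighbourhood of a single vertex, forcing $\kappa=d$, which we have excluded; hence $a\geq 2$ and $r\geq 1$. Writing $e(A,S)=a(d-r)$ for the number of edges between $A$ and $S$, and using that every vertex of $S$ has at least one neighbour in the infinite side $B$, the heart of the matter is a counting comparison between $A$ and a translate $A\alpha$ chosen, via transitivity, so that $A\alpha$ meets $S$. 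Since distinct atoms are disjoint, $A\alpha$ splits across $S$ and $B$, and a careful accounting of the edges gained and lost in passing from $A$ to $A\alpha$ bounds $a$ and $r$ against $\kappa$ and $d$. This is precisely the computation yielding Mader's inequality $\kappa\geq\tfrac{2}{3}(d+1)$, which I would reproduce in this setting.

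To reach the sharp constant $\tfrac{3}{4}(d+1)$ I would feed the one-ended hypothesis back into this count. The configurations making Mader's bound tight are ``thin'', tree-like amalgamations of finite pieces, and such graphs necessarily have more than one end; the $d$-regular tree, with $\kappa=1$, is the extreme illustration that no bound of this type can hold without a restriction on ends. Concretely, since $B$ is infinite, connected, and $G$ has a single end, the separator $S$ cannot be ``saturated'' towards $B$ in the manner the extremal atoms demand, which forces extra edges between $S$ and $B$, equivalently denser atoms; re-running the edge count with this additional information upgrades the factor $\tfrac{2}{3}$ to $\tfrac{3}{4}$.

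The main obstacle I anticipate is exactly this last step: making rigorous the passage from the qualitative statement that $G$ has one end to the quantitative gain in the atom count. The $\tfrac{2}{3}(d+1)$ bound is soft and falls out of the atom method almost mechanically, whereas the improvement to $\tfrac{3}{4}(d+1)$ requires pinning down which extremal fragments are excluded by one-endedness and checking that their exclusion tightens the inequality by the right amount. Verifying that no borderline one-ended configuration slips through the improved count is where I expect the real work to lie.
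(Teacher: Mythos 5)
The paper does not prove this lemma at all: it is quoted verbatim from Babai and Watkins \cite{BW-Connectivity} (see also \cite[Lemma~2.4]{B-GrowthRate}) and used as a black box, so there is no internal proof to compare yours against. Judged on its own terms, your write-up correctly assembles the standard preliminaries of the atom method: one-endedness yields a finite fragment and hence a finite atom $A$; for $\kappa<d$ the atoms are pairwise disjoint blocks of imprimitivity partitioning $VG$, so $G[A]$ is connected and $r$-regular with $a=|A|\geq r+1$ and $a\mid\kappa$. This machinery does deliver Mader's bound $\kappa\geq\tfrac{2}{3}(d+1)$ --- essentially because an atom $A'$ meeting $S=N(A)$ lies inside $S$, its neighbourhood $N(A')$ is again a minimum cut and must contain $A$ together with at least one further atom, whence $\kappa\geq 2a$ and $d\leq r+\kappa\leq a-1+\kappa\leq\tfrac{3}{2}\kappa-1$.

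But the entire content of the lemma beyond Mader's classical inequality is the passage from $\tfrac{2}{3}$ to $\tfrac{3}{4}$, and at exactly that point your argument stops being a proof. ``The separator cannot be saturated towards $B$, which forces extra edges between $S$ and $B$, equivalently denser atoms; re-running the edge count upgrades the factor'' is a statement of intent, not an argument: you never say which inequality improves, by how much, or why one-endedness forces it. Concretely, what is needed is to push $\kappa\geq 2a$ up to $\kappa\geq 3a$ --- equivalently, to exhibit a third pairwise disjoint atom inside the minimum cut $N(A')$, exploiting that the fragment $B$ is infinite while every fragment avoiding the unique end is finite; this is the delicate analysis of overlapping minimum cuts and their ``corners'' that occupies most of Babai and Watkins's paper, and your heuristic about tight examples for Mader's bound being multi-ended does not substitute for it. Your own closing paragraph concedes the step is missing. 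As it stands the proposal establishes only $\kappa\geq\tfrac{2}{3}(d+1)$, not the stated bound; either carry out that final count in full, or do what the paper does and simply cite \cite{BW-Connectivity}.
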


We deduce from Lemma~\ref{lem_BW} and Theorem~\ref{imrcb} that every transitive planar graph with precisely one end has a consistent embedding in the Euclidean plane.
This means that for every transitive planar $1$-ended graph $G$ there are only two possibilities for the spin, one of which is the reverse of the other, such that every vertex of $G$ has one of these two spins.

\section{Proof of the main theorem}

We shall prove that every planar hyperbolic graph answers Question~\ref{quest_KaimWoess} in the negative.
Before we directly attack the question in the situation of planar graphs, we prove a general lemma (Lemma~\ref{lem_ellipticExists}) which might help to give a negative answer to the question in the general case.

\medskip

Let us recall the notions of elliptic and hyperbolic automorphisms.
Let $G$ be a hyperbolic graph.
\begin{enumerate}[(i)]
\item An automorphism of~$G$ is called {\em elliptic} if it fixes a finite set of vertices.
\item An automorphism $\alpha$ of~$G$ is called {\em hyperbolic} if it is not elliptic and fixes precisely two boundary points $\eta,\xi$ and if $(x{\alpha^n})_{n\in\nat}$ converges to~$\eta$ and $(x{\alpha^{-n}})_{n\in\nat}$ converges to~$\xi$ for every $x\in VG$.
\end{enumerate}

If $\alpha$ is a hyperbolic automorphism, then we call the boundary point to which all the sequences $(x{\alpha^n})$, $x\in VG$, converge the {\em direction} of~$\alpha$.

\medskip

For automorphism groups of hyperbolic graphs, there is the following classification of their elements; compare with \cite[Chapitre~9]{CoornDelPapa}.

\begin{lem}\label{lem_HypGraphNoParaElem}
Any automorphism of a hyperbolic graph is either elliptic or hyperbolic.\qed
\end{lem}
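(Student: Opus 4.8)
The plan is to prove the dichotomy by establishing that every automorphism $\alpha$ that is not elliptic must be hyperbolic. I would first pin down the elliptic case. If $\alpha$ fixes a finite nonempty set $F$ of vertices, then $\alpha$ permutes $F$, so some power $\alpha^m$ fixes a vertex $v\in F$; since $G$ is locally finite and connected, $\alpha^m$ permutes each finite sphere around $v$, so every orbit of $\alpha^m$, and hence of $\alpha$, is finite. Conversely a finite orbit is itself a finite $\alpha$-invariant set. Thus $\alpha$ is elliptic precisely when all its orbits are bounded, and it suffices to show: if one orbit $\{x\alpha^n : n\in\ganz\}$ is infinite, then $\alpha$ is hyperbolic in the sense defined above (note that then \emph{every} orbit is infinite, as a single finite orbit would force all orbits finite).

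Fix a vertex $x$ with infinite orbit. The core of the argument is that this orbit escapes to infinity along a quasi-geodesic. Indeed, Proposition~\ref{prop_ParaOnQuasiGeod} supplies constants $\kappa,\lambda$ such that $\{x\alpha^n : n\in\ganz\}$ lies on a $(\kappa,\lambda)$-quasi-geodetic double ray $R$, and by Proposition~\ref{prop_geodAndQuasigeod} this $R$ stays within bounded Hausdorff distance of a geodetic double ray $D$. Let $\eta$ and $\xi$ be the two boundary points to which the two sides of $D$ converge; since $D$ is geodetic these are distinct. Following the orbit along $R$, the forward iterates $x\alpha^n$ run off towards one side of $D$ and the backward iterates $x\alpha^{-n}$ towards the other, so $x\alpha^n\to\eta$ and $x\alpha^{-n}\to\xi$ in $\widehat{G}$. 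For an arbitrary vertex $y$ one has $d(y\alpha^n,x\alpha^n)=d(y,x)$ for all $n$, so the orbit of $y$ stays within bounded distance of that of $x$; hence $y\alpha^n\to\eta$ and $y\alpha^{-n}\to\xi$ as well.

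It remains to identify the fixed boundary points. Automorphisms of $G$ are isometries and so extend to homeomorphisms of the compact space $\widehat{G}$; therefore $\eta\alpha=(\lim_n x\alpha^n)\alpha=\lim_n x\alpha^{n+1}=\eta$, the last equality because $(x\alpha^{n+1})_n$ is a reindexing of a sequence converging to $\eta$, and symmetrically $\xi\alpha=\xi$. To see that $\eta,\xi$ are the \emph{only} fixed boundary points I would invoke the north--south dynamics of a hyperbolic isometry: taking any $\zeta\notin\{\eta,\xi\}$, represent it by a geodetic ray issuing near $D$ and use $\delta$-thinness of the triangles spanned by this ray, by $D$, and by their $\alpha$-images to show $\zeta\alpha^n\to\eta$, so that $\zeta$ cannot be fixed. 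Combined with the convergence established above, this yields exactly the defining properties of a hyperbolic automorphism.

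The main obstacle is the step asserting that an infinite orbit lies on a quasi-geodesic, equivalently that no automorphism is \emph{parabolic} (fixing a single boundary point with sublinear displacement). This is precisely where transitivity enters, through Proposition~\ref{prop_ParaOnQuasiGeod}, and it is the genuinely substantive point: on a locally finite but non-transitive hyperbolic graph — for instance a Groves--Manning combinatorial horoball — the shift of the base is parabolic, so ruling out this behaviour cannot be purely formal and relies on the transitivity available in the setting at hand. The remaining delicate ingredient is the north--south dynamics used for uniqueness; I would carry it out through the standard Gromov-product estimates, keeping $(\zeta \mid \xi)$ bounded so that forward iteration drives $\zeta$ into the basin of $\eta$.
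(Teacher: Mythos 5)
Your argument is correct, and it is worth noting that the paper offers no proof of this lemma at all: it is stated with a \qed and only a pointer to \cite[Chapitre~9]{CoornDelPapa}, so there is nothing to compare line by line. What you supply is the standard route: the reduction ``elliptic $\Leftrightarrow$ some (equivalently every) orbit is finite'', then Proposition~\ref{prop_ParaOnQuasiGeod} to place an infinite orbit on a quasi-geodetic double ray, Proposition~\ref{prop_geodAndQuasigeod} to shadow it by a geodetic double ray with two distinct boundary points, continuity of the induced homeomorphism of $\widehat{G}$ to see these are fixed, and north--south dynamics for uniqueness. All of these steps are sound (the last is only sketched, but the Gromov-product argument you indicate is the standard one and goes through). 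Your most valuable contribution is the observation that the statement, as printed, lacks the hypothesis it actually needs: Proposition~\ref{prop_ParaOnQuasiGeod} requires transitivity, and your combinatorial-horoball example shows the dichotomy genuinely fails for general locally finite hyperbolic graphs --- the shift of the horoball over $\ganz$ has only infinite orbits yet fixes the unique boundary point, so it is neither elliptic nor hyperbolic in the paper's sense. Since the lemma is only ever applied in the paper to transitive graphs (Lemma~\ref{lem_ellipticExists} and Theorem~\ref{thm_FixedSetHyp}), no downstream result is affected, but the lemma's statement should carry the transitivity assumption, and your proof is exactly the proof of that corrected statement.
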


We now show the existence of certain elliptic and hyperbolic elements.

\begin{lem}\label{lem_ellipticExists}
Let $G$ be a $1$-ended $\delta$-hyperbolic graph and $\Gamma$ be a group acting transitively on~$G$ such that $\Gamma$ fixes a hyperbolic boundary point $\omega$ of~$G$.
Then the following statements hold.
\begin{enumerate}[{\em (i)}]
\item\label{item_hyperbolicElement} For every two vertices $x,y\in VG$ with $d(x,y)>2\delta$ that lie on a common geodetic double ray between $\omega$ and another hyperbolic boundary point, there exists a hyperbolic element $h$ in~$\Gamma$ with $xh=y$.
\item\label{item_oneElliptic} There exists a non-trivial elliptic element in~$\Gamma$ that fixes a vertex of~$G$.
\item\label{item_ellipticProduct} There exist two non-trivial distinct elliptic elements in~$\Gamma$ whose product is also non-trivial and elliptic and such that all these three automorphisms fix a common vertex of~$G$.
\end{enumerate}
\end{lem}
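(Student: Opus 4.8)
The plan is to prove~(i) by showing that \emph{any} $g\in\Gamma$ with $xg=y$, which exists by transitivity, is automatically hyperbolic. Orient the given geodetic double ray $D=(x_i)$ so that $x_i\to\omega$ as $i\to+\infty$ and write $x=x_0$, $y=x_m$; since the only requirement on $h:=g$ is $xg=y$, and the case $m<0$ is handled by applying the argument below to $g\inv$, I may assume $m=d(x,y)>2\delta$, i.e.\ that $y$ lies on the $\omega$-side of $x$. Because $\Gamma$ fixes $\omega$, the isometry $g$ sends the $\omega$-ray $R=x_0x_1\dots$ to a geodetic ray $Rg$ issuing from $y$ and converging to $\omega$. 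This ray and the sub-ray $x_mx_{m+1}\dots$ of $D$ start at the common vertex $y$ and converge to the common boundary point $\omega$, so by $\delta$-thinness of geodesic triangles their points at equal arc-length stay within $2\delta$ of each other. Setting $D_n:=d(xg^n,x_{nm})$ I then estimate, using that $g$ is an automorphism, $D_{n+1}\le D_n+d(x_{nm}g,x_{(n+1)m})\le D_n+2\delta$, hence $D_n\le 2\delta n$ and $d(x,xg^n)\ge d(x,x_{nm})-D_n\ge n(m-2\delta)\to\infty$. Thus the $g$-orbit of $x$ is unbounded, $g$ is not elliptic, and by Lemma~\ref{lem_HypGraphNoParaElem} it is hyperbolic (with direction $\omega$), proving~(i).

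For~(ii) and~(iii) the plan is to reduce both to one structural statement: \emph{the vertex stabiliser $\Gamma_x$ is infinite}. This suffices because any automorphism fixing a vertex fixes the finite vertex set $\{x\}$ and is therefore elliptic by definition; so a nontrivial element of $\Gamma_x$ gives~(ii), while if $\abs{\Gamma_x}\ge 4$ I may pick $a\in\Gamma_x\sm\{1\}$ and then $b\in\Gamma_x\sm\{1,a,a\inv\}$ (possible since the excluded set has at most three elements), whereupon $a$, $b$ and $ab$ are three nontrivial elements of $\Gamma_x$, all elliptic and fixing $x$, with $a\neq b$ and $ab\neq1$; this is exactly~(iii). (The product clause really does force $\abs{\Gamma_x}\ge4$: in $\Z/2\Z$ and $\Z/3\Z$ no such pair exists.) To prove $\Gamma_x$ infinite I argue by contradiction. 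If $\Gamma_x$ were finite, then $\Gamma$ would act properly discontinuously and cocompactly on the proper geodesic hyperbolic space $G$, so the \v{S}varc--Milnor lemma would make $\Gamma$ a finitely generated hyperbolic group quasi-isometric to $G$ via the orbit map, with a $\Gamma$-equivariant homeomorphism $\rand\Gamma\isom\rand G$. As $G$ is one-ended, $\Gamma$ would be one-ended and hence non-elementary; but $\Gamma$ fixes $\omega$, and therefore fixes the corresponding point of $\rand\Gamma$, contradicting the boundary-rigidity of non-elementary hyperbolic groups acting on their own Cayley graphs \cite[Section~4.D]{W-FixedSets}. Hence $\Gamma_x$ is infinite and~(ii),~(iii) follow.

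I expect the delicate point in~(i) to be pinning the constant so that the hypothesis $d(x,y)>2\delta$ is exactly what is needed: everything rests on the fellow-travelling estimate that two geodetic rays from a common vertex to $\omega$ stay $2\delta$-close at equal arc-length, which I would extract directly from the $\delta$-thinness in the definition of hyperbolicity rather than from Proposition~\ref{prop_W22.12a} (whose reindexing constant is too lossy to reach the sharp threshold). For~(ii) and~(iii) the conceptual heart is the reduction to infinitude of $\Gamma_x$; this is comparatively soft once one grants the \v{S}varc--Milnor lemma and the cited boundary-rigidity, the only genuine subtlety being that the orbit map is equivariant on the boundary, so that fixing $\omega\in\rand G$ transfers to fixing a point of $\rand\Gamma$. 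I note that~(ii) alone already follows from the free-action case of \cite{W-FixedSets} quoted in the introduction; it is the finite-stabiliser refinement that additionally yields~(iii).
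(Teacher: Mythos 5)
Your proof is correct, but it takes a genuinely different route from the paper's, most notably in parts (ii) and (iii). For (i) the overall strategy coincides with the paper's --- take any $g\in\Gamma$ with $xg=y$ (transitivity), show it cannot be elliptic, and invoke Lemma~\ref{lem_HypGraphNoParaElem} --- but your execution is cleaner: since $\Gamma$ fixes $\omega$, the rays $Rg$ and $x_mx_{m+1}\dots$ both issue from $y$ and converge to $\omega$, the $2\delta$ fellow-travelling at equal arc-length (which does follow from the Rips condition with exactly the constant $2\delta$) gives the linear lower bound $d(x,xg^n)\ge n(m-2\delta)$ directly, whereas the paper runs a more laborious induction on the translates $\pi_y\alpha^i$ under the assumption $x\alpha^n=x$. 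For (ii) and (iii) the paper argues combinatorially and stays self-contained: it conjugates a hyperbolic element $\alpha_0$ to produce infinitely many quasi-geodetic axes meeting balls of bounded size, and a pigeonhole argument yields two conjugates $\alpha_i,\alpha_j$ agreeing on a vertex $b$, so that $\alpha_i\inv\alpha_j$ is the desired non-trivial elliptic element; (iii) is obtained by repeating the pigeonhole on conjugates of that element. You instead prove the strictly stronger statement that every vertex stabiliser $\Gamma_x$ is infinite, via \v{S}varc--Milnor, quasi-isometry invariance of hyperbolicity and of ends, and the fact that a non-elementary hyperbolic group fixes no point of its boundary; (ii) and (iii) then follow by elementary group theory inside $\Gamma_x$ (your reduction $\abs{\Gamma_x}\ge 4\Rightarrow$~(iii) is sound, and note that $ab\notin\{a,b\}$ is automatic). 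The trade-off is that your argument imports machinery the paper deliberately avoids --- in particular it rests on the boundary-rigidity statement that the paper only cites informally in the introduction, and which is stated there without the non-elementarity hypothesis that is actually needed; your derivation of non-elementarity from one-endedness is precisely the point that makes that citation applicable. In exchange you obtain more than the lemma claims, namely infinite vertex stabilisers, which the paper's pigeonhole construction does not give.
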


\begin{proof}
To prove (\ref{item_hyperbolicElement}) let $x,y$ lie on a common geodetic double ray $\pi$ as in the assertion with $d(x,y)=2\delta+d$ for a $d>0$ such that $x$ separates $y$ from $\omega$ on~$\pi$.
Let $\pi_y$ be the subray of~$\pi$ that starts at~$y$ and converges to~$\omega$.
As $\Gamma$ acts transitively on~$G$, there is an automorphism $\alpha\in\Gamma$ with $x\alpha=y$.
Lemma~\ref{lem_HypGraphNoParaElem} tells us that $\alpha$ is either hyperbolic or elliptic.
Let us suppose, for a contradiction, that $\alpha$ is elliptic.
Then the orbit of~$x$ under $\alpha$ is finite.
Let $n> 0$ be minimal with $x{\alpha^{n}}=x$.
We consider the rays $\pi_y{\alpha^i}$ with $i=0,\ldots,n$.
Each of these rays converges to~$\omega$ and contains the vertices $x{\alpha^{i+1}}$ and $x{\alpha^{i+2}}$.
As $G$ is $\delta$-hyperbolic, there is a vertex $z_1$ on $\pi_y\alpha$ with $d(x,z_1)\leq\delta$.
Since $d(x,x{\alpha})=2\delta+d$, the inequality $d(x,z_1)\leq\delta$ implies $d(x{\alpha},z_1)\geq \delta+d$.
So~$x$ has distance at most $\delta$ to a vertex on $\pi_y\alpha$ whose distance to $x\alpha^2$ is at least $3\delta+2d$.
Then there is a vertex $z_2$ on $\pi_y\alpha^2$ with distance at most $\delta$ to~$z_1$.
We have $d(x\alpha^2,z_2)\geq d(x\alpha^2,z_1)-d(z_1,z_2)\geq 2\delta+2d$ and, thus, $d(x\alpha^3,z_2)\geq 4\delta+3d$.
Inductively, $x=x\alpha^n$ lies in an $(n\delta)$-neighborhood of a vertex $z_{n}$ on $\pi_y\alpha^n$ whose distance on that ray to $x\alpha^{n+1}=x\alpha$ is at least $\delta+(n+1)(\delta+d)$.
But the inequality $d(z_{n},x\alpha)\geq \delta+(n+1)(\delta+d)$ implies $d(z_{n-1},x)\geq n(\delta+d)$ which is impossible.
Hence, $\alpha$ has to be hyperbolic, contradicting our assumption.

\medskip

For the proof of (\ref{item_oneElliptic}), let $\alpha_0$ be a hyperbolic element in~$\Gamma$.
Then $\alpha_0$ fixes $\omega$ and precisely one further boundary point $\eta_0$.
We assume that the direction of~$\alpha_0$ is~$\eta_0$.
For any $x_0\in VG$, there are constants $c_1\geq 1,c_2\geq 0$ such that the vertices $x_0{\alpha_0^i}$, $i\in\ganz$, lie on a $(c_1,c_2)$-quasi-geodetic double ray $\pi_0$ by Proposition~\ref{prop_ParaOnQuasiGeod}.
Note that $c_1$ and $c_2$ depend only on~$\delta$ and $d(x_0,x_0\alpha_0)$.

We are now going to construct a sequence $\sequ{x}$ of vertices in~$G$, a sequence $\sequ{\pi}$ of $(c_1,c_2)$-quasi-geodetic double rays, a sequence $\sequ{\eta}$ of hyperbolic boundary points, a sequence $\sequ{\alpha}$ of hyperbolic elements of~$\Gamma$, and a sequence $\sequ{\beta}$ of automorphisms of~$G$ such that the orbit of~$x_i$ under $\alpha_i$ lies on~$\pi_i$, such that the subrays of~$\pi_i$ converge either to~$\omega$ or to~$\eta_i$, such that $\eta_i$ is the direction of~$\alpha_i$, and such that $x_i$ has distance more than $2\kappa$ to all $\pi_j$ with $j<i$.
For this, let $\kappa=\kappa(\delta,c_1,c_2)$ be the constant from Proposition~\ref{prop_geodAndQuasigeod}, that is, every $(c_1,c_2)$-quasi-geodesic lies in a $\kappa$-neighborhood of a geodesic with the same endpoints and vice versa.
Let ${x_1\in VG}$ with $d(x_1,\pi_0)>2\kappa$ and let $\beta_1\in\Gamma$ with $x_0{\beta_1}=x_1$.
Then $\pi_1:=\pi_0{\beta_1}$ cannot lie $2\kappa$-close to a geodetic double ray between $\omega$ and~$\eta_0$.
Thus, we have $\eta_1:=\eta_0{\beta_1}\neq\eta_0$.
Since $\alpha_0$ is a hyperbolic element, so is $\alpha_1:=\beta_1\inv\alpha_0\beta_1$. Continuing like this we obtain the sequences as desired.
Among the automorphisms $\alpha_i$ and $\alpha_i\inv$ we shall find a pair the product of which is non-trivial, elliptic, and fixes some vertex as required by the assertion.

Consider an infinite sequence $\sequ{B}$ of balls of radius $2\kappa$ around elements of~$\pi_0$ that converge to~$\omega$.
Since $\Gamma$ acts transitively on~$VG$, there is a finite number $n$ such that each of these balls consists of~$n$ vertices.
As the number of the quasi-geodetic double rays with non-trivial intersection with $B_i$ increases with~$i$ and tends to infinity, there is a ball $B_m$ such that some vertex $b\in B_m$ lies on two distinct double rays $\pi_i,\pi_j$ with $i\neq j$.
Since $d(x_k,x_k\alpha_k)=d(x_0,x_0\alpha_0)$ for all $k\in\nat$ and since all balls of radius $d(x_0,x_0\alpha_0)$ have the same number of vertices, we may even assume that $b\alpha_i\inv=b\alpha_j\inv$.
Let us consider the automorphism $\alpha_i\inv\alpha_j$.
This automorphism obviously fixes~$b$, so it is an elliptic element, and it is non-trivial because of $\alpha_i\neq\alpha_j$.
This proves statement~(\ref{item_oneElliptic}).

\medskip

It remains to prove (\ref{item_ellipticProduct}).
We continue with the same notation as in the proof of~(\ref{item_oneElliptic}).
Let $\gamma:=\alpha_i\inv\alpha_j$ be the elliptic element we constructed in the proof of~(\ref{item_oneElliptic}).
Then, for each $k\in\nat$, $\gamma_k:=\alpha^k\gamma\alpha^{-k}$ is an elliptic element that is not trivial but acts trivially on~$b\alpha^{-k}$.
By a similar argument as above, we shall find two automorphisms of the $\gamma_k$ and $\gamma_k\inv$ that will satisfy together with their product the assertion (\ref{item_ellipticProduct}).
Each elliptic element $\gamma_k$ has to act on the set of $(c_1,c_2)$-quasi-geodetic rays from $b\alpha^{-k}$ to~$\omega$.
Let us consider the sequence of balls $\seq{D}{k}{\nat}$ with center $b\alpha^{-k}$ and radius $2\kappa$.
Like in the proof of (\ref{item_oneElliptic}), there is an $m\in\ganz$ such that two distinct $\gamma_k,\gamma_l$, with $k\neq l$, both fix a vertex~$y\in D_m$.
Then $\gamma_k\inv\gamma_l$ also fixes $y$ and it is again non-trivial because $\gamma_k\neq\gamma_l$.
Hence $\gamma_k\inv\gamma_l$ satisfies assertion~(\ref{item_ellipticProduct}).
\end{proof}

With this information about hyperbolic and elliptic elements in automorphism groups of hyperbolic graphs we can now prove our main result.

\begin{thm}\label{thm_FixedSetHyp}
For every planar $1$-ended hyperbolic graph $G$, and every group $\Gamma$ of automorphisms of~$G$ that acts transitively on~$VG$, no hyperbolic boundary point of~$G$ is fixed by all elements of $\Gamma$.
\end{thm}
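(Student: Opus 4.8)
\emph{The plan is to argue by contradiction.} Suppose that $\Gamma$ acts transitively on $VG$ and fixes a hyperbolic boundary point $\omega\in\rand G$; I aim to contradict the existence, guaranteed by Lemma~\ref{lem_ellipticExists}, of enough elliptic elements with a common fixed vertex. The whole force of the hypotheses comes from the rigidity of planar embeddings, so the first step is to set up the relevant local structure. Since $G$ is a $1$-ended transitive graph it is regular of some degree $d\ge 3$ (degree $\le 2$ forces $0$ or $2$ ends), so by Lemma~\ref{lem_BW} it is at least $3$-connected and, by the deduction following that lemma, carries a consistent embedding $\sigma$. Hence every automorphism is spin-preserving or spin-reversing, the spin-preserving ones forming a subgroup $\Gamma^+$ of index at most $2$, and a product of two spin-reversing automorphisms is spin-preserving. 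Moreover, by the essential uniqueness of the embedding of a $3$-connected graph (Theorem~\ref{imrcb}), a spin-preserving automorphism fixing a vertex $v$ together with one incident edge fixes the whole flag at $v$ and hence, by $3$-connectivity, is the identity. Therefore each vertex stabilizer $\Gamma_v$ embeds into the dihedral group $D_d$, its spin-preserving part $\Gamma_v^+=\Gamma_v\cap\Gamma^+$ embeds into the cyclic group $C_d$, and in particular a spin-preserving automorphism fixing $v$ is of finite order and acts on the $d$ edges at $v$ as a rotation.

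The technical heart of the argument, and the step I expect to be the main obstacle, is to promote this local rotation structure to the boundary. I would show that the planar embedding endows $\rand G$ with a $\Gamma$-equivariant \emph{circular order}: in the (essentially unique) embedding the boundary points are arranged cyclically around the rim of the disk carrying $G$, every spin-preserving automorphism acts on $\rand G$ as an order-preserving bijection and every spin-reversing one as an order-reversing bijection. Crucially, this circular order must be compatible with the rotation system at each vertex, so that the $d$ edges at a vertex $v$ partition $\rand G$ into cyclically ordered ``sectors''; a non-trivial rotation at $v$ then shifts these sectors and hence induces a \emph{non-trivial} order-preserving bijection of $\rand G$. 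Establishing that this circular order is well defined (distinct boundary points occupy distinct positions, equivalent geodetic rays landing together by Proposition~\ref{prop_W22.12a}) and genuinely $\Gamma$-equivariant with the stated compatibility is where planarity and $1$-endedness really enter, and is the delicate part of the proof.

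Granting the circular order, I would isolate the following local fixed-point claim: a non-trivial spin-preserving automorphism $\rho$ fixing a vertex $v$ fixes no point of $\rand G$. Indeed, by the previous paragraph $\rho$ has finite order and induces a non-trivial order-preserving bijection of the circular order on $\rand G$. If $\rho$ fixed a boundary point $\xi$, then cutting the circular order at $\xi$ would exhibit $\rho$ as an order-preserving bijection of finite order of a \emph{linear} order; but such a map $f$ with $f^{m}=\mathrm{id}$ must be the identity, since $f(x)>x$ for some $x$ would give $f^{m}(x)>x$ and $f(x)<x$ would give $f^{m}(x)<x$. This forces $\rho$ to act trivially on $\rand G$, contradicting non-triviality of its induced action. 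Hence $\rho$ has no fixed boundary point; in particular $\rho$ does not fix $\omega$.

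For the endgame I would invoke Lemma~\ref{lem_ellipticExists}(\ref{item_ellipticProduct}) to obtain two distinct non-trivial elliptic elements $\alpha,\beta\in\Gamma$ whose product $\alpha\beta$ is also non-trivial and elliptic, with $\alpha,\beta,\alpha\beta$ all fixing a common vertex $v$. Since $\Gamma$ fixes $\omega$, each of $\alpha,\beta,\alpha\beta$ fixes $\omega$. By the claim of the previous paragraph neither $\alpha$ nor $\beta$ can be spin-preserving (a non-trivial spin-preserving automorphism fixing $v$ cannot fix $\omega$), so both are spin-reversing; but then $\alpha\beta$ is spin-preserving, non-trivial, and fixes both $v$ and $\omega$, again contradicting the claim. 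This contradiction shows that no such $\Gamma$ can fix a hyperbolic boundary point, proving Theorem~\ref{thm_FixedSetHyp}. I emphasize that, once the equivariant circular order of the second paragraph is in hand, the remaining steps are short and purely combinatorial; the genuine work lies in that construction.
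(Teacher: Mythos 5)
Your opening reduction and your endgame coincide with the paper's: both arguments use the consistency of the embedding (Lemma~\ref{lem_BW} plus Theorem~\ref{imrcb}) and the spin-parity trick with Lemma~\ref{lem_ellipticExists}\,(iii) to reduce everything to the single claim that a non-trivial \emph{spin-preserving} elliptic element of $\Gamma$ fixing a vertex cannot also fix $\omega$. The problem is that your proof of that claim rests entirely on the paragraph you yourself flag as ``the technical heart'': the existence of a $\Gamma$-equivariant circular order on $\rand G$, compatible with the rotation system, on which a non-trivial rotation at a vertex acts non-trivially. This is asserted, not proved, and it is where all of the difficulty of the theorem actually sits. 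Two separate things are missing. First, that the planar embedding induces a well-defined circular order on the set of \emph{hyperbolic boundary points} (not just on ends --- the graph is $1$-ended, so the end structure separates nothing) is a genuinely nontrivial statement about planar $1$-ended hyperbolic graphs; it is essentially the assertion that $\rand G$ embeds in a circle, a fact in the neighbourhood of deep results on planar hyperbolic groups, and nothing in your sketch indicates how to extract it from Propositions~\ref{prop_W22.12a} or \ref{prop_geodAndQuasigeod}. Second, even granting the circular order, the step ``a non-trivial rotation at $v$ shifts the sectors and hence induces a non-trivial bijection of $\rand G$'' needs an argument: a priori the automorphism could permute the edges at $v$ non-trivially while two rays leaving $v$ through different sectors still converge to the same boundary point, so that the induced action on $\rand G$ is trivial and your linear-order contradiction evaporates. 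Without ruling this out, the claim ``$\rho$ fixes no point of $\rand G$'' does not follow.

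For comparison, the paper never constructs a boundary circle. Having isolated a non-trivial spin-preserving elliptic $\varphi$, it takes a vertex $y$ with $y\varphi\neq y$, the orbit cycle $C$, and $\varphi$-compatible geodetic rays $\pi_i$ from the orbit to $\omega$, and then runs a direct case analysis on how $\pi_0$ and $\pi_1$ intersect: if they meet only finitely often, planarity and the spin-preservation of $\varphi$ force every boundary point other than $\omega$ to be separated from $\omega$ by a finite cycle, contradicting $1$-endedness; if they meet infinitely often, one either produces boundedly long cycles $C_i$ separating $C$ from $\omega$ and derives a contradiction by counting vertices in finite components of complements of balls (using transitivity), or finds a cycle inside a geodetic ray. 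If you want to salvage your route, you would need to supply a genuine construction of the equivariant circular order and a proof that vertex rotations act faithfully on it; as written, the proposal defers the entire content of the theorem to an unproved intermediate statement.
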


\begin{proof}
Let us suppose, seeking for a contradiction, that there is a planar $1$-ended hyperbolic graph $G$ and a subgroup $\Gamma$ of~$\Aut(G)$ acting transitively on~$VG$ and fixing a hyperbolic boundary point $\omega$.
Let $\delta$ be the hyperbolicity constant of~$G$ as above, and let $d$ be the degree of some, and hence any vertex of~$G$.
Then we have $d\geq 3$ and, by Lemma~\ref{lem_BW} and Theorem~\ref{imrcb}, every automorphism in $\Gamma$ is consistent, either spin-preserving or spin-reversing.
Let $uvw$ be a 3-vertex subpath of a path $P$.
We say that a vertex $x\in N(v)\sm \{u,w\}$ lies {\em to the right} of~$P$ if in the spin of~$v$ we have $vx$ between $vw$ and~$vu$.
If $x$ does not lie to the right of~$P$ then it lies {\em to the left} of~$P$.

\medskip

By Lemma~\ref{lem_ellipticExists} there is a non-trivial elliptic element $\varphi\in\Gamma$.
If all non-trivial elliptic automorphisms are spin-reversing, then this is a direct contradiction to Lemma~\ref{lem_ellipticExists}\,(\ref{item_ellipticProduct}) as the product of any two spin-reversing automorphisms has to be spin-preserving.
Thus, we may assume that $\varphi$ is spin-preserving.

Let $y$ be a vertex with $y\varphi\neq y$.
As $\varphi$ is elliptic, there is a minimal $n\in\nat$ such that $y{\varphi^n}=y$.
For all $i=0,\ldots,n-1$, let $g_i$ be a geodesic from $y_{i}:=y{\varphi^{i}}$ to~$y_{i+1}$ such that $g_i\varphi=g_{i+1}$ and let $\pi_i$ be a geodetic ray from $y{\varphi^i}$ converging to~$\omega$ such that $\pi_i\varphi=\pi_{i+1}$.
Then $C:=g_0\ldots g_{n-1}$ is a cycle of length $n\cdot l(g_0)$.
We distinguish two cases that will both lead to a contradiction: either $\pi_i$ and $\pi_{i+1}$ intersect infinitely often or not.

\medskip

Let us first consider the case that they have only finitely many common vertices.
By choosing some other vertex $y'$ on~$\pi_0$ instead of~$y$ we may assume that the corresponding rays $y_i\pi_i$ and $y_{i+1}\pi_{i+1}$ have no common vertex.
In this situation, we shall show that any hyperbolic boundary point but~$\omega$ is separated from $\omega$ by some finite cycle, which is impossible as $G$ has precisely one end.

For every vertex on~$\pi_0$ with distance larger than $l(g_0)+\delta$ to~$y_0$ there is a vertex on~$\pi_1$ of distance at most~$\delta$.
We fix a geodesic between each such two vertices whose intersection with $\pi_0$, $\pi_1$, respectively, is a connected subpath.
If we consider an infinite sequence $\sequ{z}$ of vertices of~$\pi_0$ with strictly increasing distance to~$y_0$, then either there are two infinite subsequences such that the chosen geodesics for one of them always lie to the right of~$\pi_0$ for each vertex and for the other sequence always to the left of~$\pi_0$, or the geodesics for all but finitely many lie to the same side, say to the right of~$\pi_0$.
If the first of these two situations occurs, then any geodetic ray either is equivalent to~$\pi_0$---and hence converges to~$\omega$---or is separated by some finite cycle from~$\omega$, which is impossible as $G$ is $1$-ended.
Thus, we may assume that all the above described geodesics lie eventually to the right of~$\pi_0$.
Let $V_0$ be a subset of~$VG$ consisting of all the vertices from the paths $\pi_0$, $\pi_1$, $g_0$ and from all the paths from $\pi_0$ whose first vertex lies to the right of~$\pi_0$ and that has only vertices not in $\pi_0\cup\pi_1\cup g_0$ except for its first vertex.
This means that $V_0$ consists of all those vertices of~$G$ that are separated in the plane by $\pi_0\cup\pi_1\cup g_0$ from any vertex that lies to the right of~$\pi_1$.
Then any ray in~$G[V_0]$ has to converge to~$\omega$.
Similarly we find $V_1,\ldots,V_{n-1}$, always taking the vertices to the right of~$\pi_i$ to obtain $V_i$, because $\varphi$ is spin-preserving.
We conclude that any other hyperbolic boundary point is separated from~$\omega$ by~$C$, which is impossible since $G$ has precisely one end.

\medskip

Thus, the only case left is that $\pi_0$ and~$\pi_1$ have infinitely many common vertices.
By Proposition~\ref{prop_W22.12a}, there is a $k\in\nat$ such that for all but finitely many vertices $x$ on~$\pi_0$ we have $d(x,x\varphi)\leq k+2\delta$.
Again we distinguish two cases: either $\pi_0-\pi_1$ contains infinitely many vertices or only finitely many.
Let us first suppose that there are infinitely many vertices in $\pi_0-\pi_1$.
Let $\seq{x}{i}{\nat}$ be a sequence of pairwise distinct vertices on $\pi_0\cap\pi_1$ such that the predecessor of~$x_i$ on~$\pi_0$ does not lie on~$\pi_1$ and such that the predecessor of~$x_i$ on~$\pi_1$ lies always to the same side of~$\pi_0$ at the vertex $x_i$, say to the right.
Then there is an~$M\in\nat$ such that we have for all $i\geq M$ that $C_i:=x_i\pi_1 x_i\varphi\pi_2\ldots x_i{\varphi^{n-1}}\pi_0 x_i$ is a cycle separating $C$ from~$\omega$.
The inequality $d(x,x\varphi)\leq k+2\delta$ immediately implies that all these cycles have length at most $2n(k+2\delta)$ because of $d(x_i,x_i{\varphi^{n-1}})\leq n(k+2\delta)$.
Now consider the ball $B$ with center $x_1$ and radius $2n(k+2\delta)$.
As $G$ is locally finite, $G-B$ has only finitely many components and precisely one of them is infinite because $G$ is $1$-ended.
Let $N$ denote the number of vertices in finite components of $G-B$.
Then we look at any ball $B'$ with center $x_i$ for an $i>N+2n(k+2\delta)+1$ and radius $2n(k+2\delta)$.
Again, $G-B'$ has only one infinite component and the number of vertices in the finite components of $G-B'$ is precisely $N$ by the transitivity of~$G$.
But because of~$C_i\sub B'$, the component $A$ that contains $x_1$ is finite.
Since $\pi_0$ is geodetic, $A$ contains all $x_j$ with $j\leq N+1$, so there are at least $N+1$ vertices in finite components of $G-B'$ which is a contradiction to the transitivity of~$\Gamma$ on~$G$.

Thus, the only remaining case is when there are only finitely many vertices in $\pi_0-\pi_1$.
By replacing $y$ by another suitable vertex on~$\pi_0$, we may assume that all the vertices of~$\pi_0$ lie on~$\pi_1$ or vice versa.
But then either$$\pi_n=y_0\pi_{n}y_{n-1}\ldots y_1\pi_1 y_0\pi_0$$
or
$$\pi_0=y_0\pi_0y_1\pi_1\ldots y_{n-1}\pi_{n-1}y_0\pi_n$$
contains a cycle and so it cannot be a geodetic ray, contrary to our assumption.
This completes the proof.
\end{proof}

\providecommand{\bysame}{\leavevmode\hbox to3em{\hrulefill}\thinspace}
\providecommand{\MR}{\relax\ifhmode\unskip\space\fi MR }
\providecommand{\MRhref}[2]{%
  \href{http://www.ams.org/mathscinet-getitem?mr=#1}{#2}
}
\providecommand{\href}[2]{#2}

\end{document}